\newtheorem{theorem}{Theorem}[section]
\newtheorem{corollary}[theorem]{Corollary}
\theoremstyle{definition}
\newtheorem{definition}[theorem]{Definition}
\theoremstyle{remark}
\newtheorem{remark}[theorem]{Remark}
\numberwithin{equation}{section}
\begin{document}
\title[Non-Newtonian metric and multiplicative metric equivalent metric]{A
note on the equivalence of some metric, non-Newtonian and multiplicative
metric results }
\author{Bahri Turan }
\author{C\"{u}neyt \c{C}evik}
\address{Department of Mathematics, Faculty of Science, Gazi University \\
06500 Teknikokullar Ankara, Turkey}
\email{bturan@gazi.edu.tr\\
ccevik@gazi.edu.tr }
\subjclass[2000]{Primary 47H10, 54H25; Secondary 46A40, 06F20}
\date{March 2, 2016}
\keywords{Arithmetic, non-Newtonian metric, multiplicative metric, fixed
point}

\begin{abstract}
In this short note is on the equivalence between non-Newtonian metric
(particularly multiplicative metric) and metric. We present a different
proof the fact that the notion of a non-Newtonian metric space is not more
general than that of a metric space. Also, we emphasize that a lot of fixed
point results in the non-Newtonian metric setting can be directly obtained
from their metric counterparts.
\end{abstract}

\maketitle

\section{Introduction and preliminaries}

Recently, many papers dealing with non-Newtonian metric and the
multiplicative metric have been published [2-10]. Although the
multiplicative metric and non-Newtonian metric were announced as a new
theory for topology and fixed point theory, many studies can be obtained
with a simple observation. In the present work we show that some topological
results of non-Newtonian metric can be obtained in an easier way. Therefore,
a lot of fixed point and common fixed point results from the metric setting
can be proved in the non-Newtonian metric (particularly the multiplicative
metric) setting.

Arithmetic is any system that satisfies the whole of the ordered field
axioms whose domain is a subset of $%
\mathbb{R}
$. There are infinitely many types of arithmetic, all of which are
isomorphic, that is, structurally equivalent. In non-Newtonian calculus, a 
\textit{generator} $\alpha $ is a one-to-one function whose domain is all
real numbers and whose range is a subset of real numbers. Each generator
generates exactly one arithmetic, and conversely each arithmetic is
generated by exactly one generator. By \textit{$\alpha $-arithmetic}, we
mean the arithmetic whose operations and whose order are defined as 
\begin{equation*}
\begin{array}{lrcll}
\alpha \text{-\textit{addition}} & x\ \dot{+}\ y & = & \alpha \{\alpha
^{-1}(x)+\alpha ^{-1}(y)\} &  \\ 
\alpha \text{-\textit{subtraction}} & x\ \dot{-}\ y & = & \alpha \{\alpha
^{-1}(x)-\alpha ^{-1}(y)\} &  \\ 
\alpha \text{-\textit{multiplication}} & x\ \dot{\times}\ y & = & \alpha
\{\alpha ^{-1}(x)\times \alpha ^{-1}(y)\} &  \\ 
\alpha \text{-\textit{division}} & x\ \dot{/}\ y & = & \alpha \{\alpha
^{-1}(x)\div \alpha ^{-1}(y)\} & (\alpha ^{-1}(y)\neq 0) \\ 
\alpha \text{-\textit{order}} & x\ \dot{<}\ y & \Leftrightarrow & \alpha
^{-1}(x)<\alpha ^{-1}(y) & 
\end{array}%
\end{equation*}%
for all $x$ and $y$ in the range $%
\mathbb{R}
_{\alpha }$ of $\alpha $. In the special cases the identity function $I$ and
the exponential function $\exp $ generate the classical and geometric
arithmetics, respectively.\smallskip

\begin{equation*}
\begin{tabular}{ccccccc}
\hline
$\alpha $ &  & $\alpha \text{-\textit{addition}}$ & $\alpha \text{-\textit{%
subtraction}}$ & $\alpha \text{-\textit{multiplication}}$ & $\alpha \text{-%
\textit{division}}$ & $\alpha \text{-\textit{order}}$ \\ \hline
$I$ &  & $x+y$ & $x-y$ & $xy$ & $x/y$ & $x<y\medskip $ \\ 
$\exp $ &  & $xy$ & $x/y$ & $x^{\ln y}$ $\left( y^{\ln x}\right) $ & $%
x^{1/\ln y}$ & $\ln x<\ln y$ \\ \hline
\end{tabular}%
\end{equation*}%
\smallskip

For further information about $\alpha $-arithmetics, we refer to [1].

Now, we give the definitions of non-Newtonian metric [2] and multiplicative
metric [3] with new notations.

\begin{definition}
Let $X$ be a non-empty set and let $%
\mathbb{R}
_{\alpha }$ be an ordered field generated by a generator $\alpha $ on $%
\mathbb{R}
$. The map $d^{\alpha }:X\times X\rightarrow 
\mathbb{R}
_{\alpha }$ is said to be a \textit{non-Newtonian metric} if it satisfies
the following properties:

($\alpha $m1) $\dot{0}=\alpha (0)\ \dot{\leq}\ d^{\alpha }(x,y)$ and $%
d^{\alpha }(x,y)=\dot{0}$ $\Leftrightarrow $ $x=y,$

($\alpha $m2) $d^{\alpha }(x,y)=d^{\alpha }(y,x)$

($\alpha $m3) $d^{\alpha }(x,y)\ \dot{\leq}\ d^{\alpha }(x,z)\ \dot{+}\
d^{\alpha }(z,y)$ \newline
for all $x,y,z\in X$. Also the pair $(X,d^{\alpha })$ is said to be a 
\textit{non-Newtonian metric space}.
\end{definition}

Thus we can define a new distance for every generator $\alpha .$ When $%
\alpha =\exp $, the non-Newtonian metric $d^{\exp }$ is called
multiplicative metric. Then, $%
\mathbb{R}
_{\exp }=%
\mathbb{R}
_{+}$ and $\dot{0}=1$.

\begin{definition}
Let $X$ be a non-empty set. The map $d^{\exp }:X\times X\rightarrow 
\mathbb{R}
_{+}$ is said to be a \textit{multiplicative metric} if it satisfies the
following properties:

(mm1) $1\leq \ d^{\exp }(x,y)$ and $d^{\exp }(x,y)=1$ $\Leftrightarrow $ $%
x=y,$

(mm2) $d^{\exp }(x,y)=d^{\exp }(y,x)$

(mm3) $d^{\exp }(x,y)\leq d^{\exp }(x,z).d^{\exp }(z,y)$ \newline
for all $x,y,z\in X$. Also the pair $(X,d^{\exp })$ is said to be a \textit{%
multiplicative metric space}.
\end{definition}

\section{Main results}

Let $\alpha $ be a generator on $%
\mathbb{R}
$ and $%
\mathbb{R}
_{\alpha }=\{\alpha (u):u\in 
\mathbb{R}
\}$. By the injectivity of $\alpha $ we have%
\begin{equation*}
\begin{array}{rclc}
\alpha (u+v) & = & \alpha (u)\ \dot{+}\ \alpha (v) &  \\ 
\alpha (u-v) & = & \alpha (u)\ \dot{-}\ \alpha (v) &  \\ 
\alpha (u\times v) & = & \alpha (u)\ \dot{\times}\ \alpha (v) &  \\ 
\alpha (u\ /\ v) & = & \alpha (u)\ \dot{/}\ \alpha (v) & (v\neq 0) \\ 
u\leq v & \Leftrightarrow & \alpha (u)\ \dot{\leq}\ \alpha (v) & 
\end{array}%
\ \ \text{and }\ 
\begin{array}{rcl}
\alpha ^{-1}(x\ \dot{+}\ y) & = & \alpha ^{-1}(x)+\alpha ^{-1}(y) \\ 
\alpha ^{-1}(x\ \dot{-}\ y) & = & \alpha ^{-1}(x)-\alpha ^{-1}(y) \\ 
\alpha ^{-1}(x\ \dot{\times}\ y) & = & \alpha ^{-1}(x)\times \alpha ^{-1}(y)
\\ 
\alpha ^{-1}(x\ \dot{/}\ y) & = & \alpha ^{-1}(x)\ /\ \alpha ^{-1}(y) \\ 
x\ \dot{\leq}\ y & \Leftrightarrow & \alpha ^{-1}(x)\leq \alpha ^{-1}(y)%
\end{array}%
\end{equation*}%
for all $x,y\in 
\mathbb{R}
_{\alpha }$ with $u,v\in 
\mathbb{R}
$, $x=\alpha (u)$, $y=\alpha (v)$. Therefore, $\alpha $ and $\alpha ^{-1}$
preserve basic operations and order.

\begin{remark}
\label{ref}Since the generator $\alpha $ and $\alpha ^{-1}$ are order
preserving, for any two elements $x$ and $y$ in $%
\mathbb{R}
_{\alpha }$, $x\ \dot{\leq}\ y$ if and only if $x\leq y$.
\end{remark}

Let $(X,d^{\alpha })$ be a non-Newtonian metric space. For any $\varepsilon
\ \dot{>}\ \dot{0}$ and any $x\in X$ the set 
\begin{equation*}
B_{\alpha }(x,\varepsilon )=\{y\in X:d^{\alpha }(x,y)\ \dot{<}\ \varepsilon
\}
\end{equation*}%
is called an $\alpha $\textit{-open ball of center }$x$\textit{\ and radius }%
$\varepsilon $. A topology on $X$ is obtained easily by defining open sets
as in the classical metric spaces.

Now, let us emphasize that former topology is obtained by real-valued metric
and vice versa.

\begin{theorem}
\label{thm}For any generator $\alpha $ on $%
\mathbb{R}
$ and for any non-empty set $X$ \newline
(1) If $d^{\alpha }$ is a non-Newtonian metric on $X$, then $d=\alpha
^{-1}\circ d^{\alpha }$ is a metric on $X$,\newline
(2) If $d$ is a metric on $X$, then $d^{\alpha }=\alpha \circ d$ is a
non-Newtonian metric on $X$.
\end{theorem}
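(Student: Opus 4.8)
The plan is to verify the three defining axioms in each direction directly, exploiting the fact---recorded in the displayed identities preceding Remark \ref{ref}---that $\alpha$ and $\alpha^{-1}$ carry ordinary $+$ and $\leq$ to the $\alpha$-operations $\dot{+}$ and $\dot{\leq}$ and back. The whole statement is really a transport of structure along the order-isomorphism $\alpha$, so the two parts run in parallel, one applying $\alpha^{-1}$ throughout and the other applying $\alpha$.

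For part (1), set $d(x,y)=\alpha^{-1}(d^{\alpha}(x,y))$; this is well defined into $\mathbb{R}$ because $d^{\alpha}$ takes values in $\mathbb{R}_{\alpha}$, the domain of $\alpha^{-1}$. For (m1) I would apply the order-preserving map $\alpha^{-1}$ to $\dot{0}=\alpha(0)\ \dot{\leq}\ d^{\alpha}(x,y)$ to get $0\leq d(x,y)$, and combine $\alpha(0)=\dot{0}$ with injectivity of $\alpha$ to obtain $d(x,y)=0\Leftrightarrow d^{\alpha}(x,y)=\dot{0}\Leftrightarrow x=y$. Axiom (m2) is immediate from ($\alpha$m2). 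For (m3) I apply $\alpha^{-1}$ to ($\alpha$m3): order preservation gives $\alpha^{-1}(d^{\alpha}(x,y))\leq\alpha^{-1}(d^{\alpha}(x,z)\ \dot{+}\ d^{\alpha}(z,y))$, and the additivity identity $\alpha^{-1}(a\ \dot{+}\ b)=\alpha^{-1}(a)+\alpha^{-1}(b)$ rewrites the right side as $d(x,z)+d(z,y)$.

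Part (2) is the mirror image. With $d^{\alpha}(x,y)=\alpha(d(x,y))$, note that $d$ has values in $[0,\infty)\subseteq\mathbb{R}$, so $d^{\alpha}$ lands in $\mathbb{R}_{\alpha}$. Applying the order-preserving $\alpha$ to $0\leq d(x,y)$ yields $\dot{0}=\alpha(0)\ \dot{\leq}\ d^{\alpha}(x,y)$, with the equality case handled again by injectivity; symmetry transfers verbatim; and applying $\alpha$ to the triangle inequality for $d$, followed by $\alpha(a+b)=\alpha(a)\ \dot{+}\ \alpha(b)$, produces ($\alpha$m3).

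I do not anticipate a genuine obstacle, since each step is a one-line application of the homomorphism and order-preservation identities. The only points deserving a moment's care are checking that each composite is well defined (that the codomain of the inner map sits in the domain of the outer one) and correctly invoking injectivity of $\alpha$ in the equality clause of (m1) and ($\alpha$m1)---the sole place where order preservation alone does not suffice. Remark \ref{ref} additionally guarantees that $\dot{\leq}$ and $\leq$ agree on $\mathbb{R}_{\alpha}$, so no confusion between the two orders can arise.
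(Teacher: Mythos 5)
Your proposal is correct and is essentially the paper's own argument: the paper's entire proof reads ``It is obvious that $\alpha$ and $\alpha^{-1}$ are one-to-one and order preserving,'' relying on the displayed homomorphism and order-preservation identities preceding Remark \ref{ref}, which is exactly the transport-of-structure reasoning you carry out. You have simply written out in full the axiom-by-axiom verification that the paper compresses into that one sentence.
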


\begin{proof}
It is obvious that $\alpha $ and $\alpha ^{-1}$ are one-to-one and order
preserving.
\end{proof}

\begin{corollary}
\label{cor1}For any generator $\alpha $ on $%
\mathbb{R}
$ and, let $d^{\alpha }$ and $d$ be a non-Newtonian metric and a metric on a
non-empty set $X$, respectively, as in Theorem \ref{thm}. If $\tau _{\alpha
} $ and $\tau $ are metric topologies induced by $d^{\alpha }$ and $d$,
respectively, then $\tau _{\alpha }=\tau $.
\end{corollary}

\begin{proof}
Since $\delta _{\varepsilon }=\alpha ^{-1}(\varepsilon )>0$ and $\varepsilon
_{\delta }=\alpha (\delta )\ \dot{>}\ \dot{0}$ for all $\varepsilon \ \dot{>}%
\ \dot{0},\delta >0$, we have%
\begin{eqnarray*}
B_{\alpha }(x,\varepsilon _{\delta }) &=&\{y\in X:d^{\alpha }(x,y)\ \dot{<}\
\varepsilon _{\delta }\}=\{y\in X:\alpha \left( d(x,y)\right) \ \dot{<}\
\alpha (\delta )\} \\
&=&\{y\in X:d(x,y)<\delta _{\varepsilon }\}=B(x,\delta _{\varepsilon })
\end{eqnarray*}%
for all $x\in X,\varepsilon \ \dot{>}\ \dot{0},\delta >0$. Therefore, $\tau
_{\alpha }=\tau $.
\end{proof}

\begin{corollary}
\label{cor2}Under the hypothesis of Corollary \ref{cor1}, the topological
properties of $(X,d)$ and $(X,d^{\alpha })$ are equivalent. In particular,
for a sequence $(x_{n})$ in $X$ and for an element $x\in X$\newline
(1) $x_{n}\overset{d^{\alpha }}{\rightarrow }x$ if and only $x_{n}\overset{d}%
{\rightarrow }x$,\newline
(2) $(x_{n})$ is $d^{\alpha }$-Cauchy if and only if $(x_{n})$ is $d$%
-Cauchy, and\newline
(3) $(X,d^{\alpha })$ is complete if and only if $(X,d)$ is complete.
\end{corollary}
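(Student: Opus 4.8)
The plan is to transport every assertion through the order-preserving bijection $\alpha$, exactly as was done in Corollary \ref{cor1}. The overarching claim that the topological properties of $(X,d)$ and $(X,d^{\alpha})$ coincide is immediate from Corollary \ref{cor1}: since $\tau_{\alpha}=\tau$, the two spaces carry literally the same topology, so any property expressible purely in terms of open sets (compactness, connectedness, the separation axioms, and so on) holds in one exactly when it holds in the other. It then remains to verify the three itemized consequences, and for (2) and (3) a little extra care is needed, because the Cauchy condition and completeness are metric (uniform) rather than purely topological notions.

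For (1), I would use the ball identity $B_{\alpha}(x,\alpha(\delta))=B(x,\delta)$ extracted from the proof of Corollary \ref{cor1}. The statement $x_{n}\overset{d^{\alpha}}{\rightarrow}x$ means that for every $\varepsilon\ \dot{>}\ \dot{0}$ the tail of $(x_{n})$ eventually lies in $B_{\alpha}(x,\varepsilon)$; writing $\varepsilon=\alpha(\delta)$ with $\delta=\alpha^{-1}(\varepsilon)>0$, this says exactly that the tail eventually lies in $B(x,\delta)$, which is precisely $x_{n}\overset{d}{\rightarrow}x$. Because $\varepsilon\ \dot{>}\ \dot{0}$ ranges over all strictly positive elements of $\mathbb{R}_{\alpha}$ precisely when $\delta>0$ ranges over all positive reals, the two conditions are logically equivalent.

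For (2), the same substitution works at the level of the distance values. Applying $\alpha^{-1}$ to the inequality $d^{\alpha}(x_{m},x_{n})\ \dot{<}\ \varepsilon$ and using $d=\alpha^{-1}\circ d^{\alpha}$ together with the order equivalence of Remark \ref{ref}, the $d^{\alpha}$-Cauchy condition ``for all $\varepsilon\ \dot{>}\ \dot{0}$ there is $N$ with $d^{\alpha}(x_{m},x_{n})\ \dot{<}\ \varepsilon$ whenever $m,n\geq N$'' becomes verbatim the $d$-Cauchy condition ``for all $\delta>0$ there is $N$ with $d(x_{m},x_{n})<\delta$ whenever $m,n\geq N$,'' with $\delta=\alpha^{-1}(\varepsilon)$. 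The crucial fact here, and the only place where more than pure topology is invoked, is that $\alpha$ is an order-isomorphism of the positive reals onto the strictly positive elements of $\mathbb{R}_{\alpha}$, so the two universal quantifiers match up bijectively.

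Finally, (3) follows by combining (1) and (2): if $(X,d)$ is complete and $(x_{n})$ is $d^{\alpha}$-Cauchy, then by (2) it is $d$-Cauchy, hence $d$-convergent to some $x$, and then by (1) it is $d^{\alpha}$-convergent to $x$; the reverse implication is symmetric. I do not anticipate any genuine obstacle, since the entire argument is a dictionary translation driven by $\alpha$; the one point worth stating explicitly is that completeness is not a topological invariant, so (3) cannot be deduced from $\tau_{\alpha}=\tau$ alone and really does rely on the metric identity $d=\alpha^{-1}\circ d^{\alpha}$ of Theorem \ref{thm}.
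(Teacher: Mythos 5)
Your proposal is correct and follows exactly the route the paper intends: the paper states Corollary \ref{cor2} without any written proof, treating it as an immediate consequence of Corollary \ref{cor1} and the metric identity $d=\alpha^{-1}\circ d^{\alpha}$ of Theorem \ref{thm}, and your argument simply fills in those details (the ball identity for convergence, the quantifier translation via $\alpha$ for the Cauchy condition, and the combination of the two for completeness). One small but genuine merit of your write-up over the paper's phrasing: you correctly point out that Cauchyness and completeness are uniform, not topological, notions, so parts (2) and (3) really do require the metric relation from Theorem \ref{thm} and cannot follow from $\tau_{\alpha}=\tau$ alone --- a distinction the paper's wording (``the topological properties \dots are equivalent. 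In particular \dots'') blurs. A minor citation quibble: the order fact you attribute to Remark \ref{ref} is really the displayed equivalence $x\ \dot{\leq}\ y\Leftrightarrow \alpha^{-1}(x)\leq \alpha^{-1}(y)$ from the beginning of the main results section; Remark \ref{ref} makes the stronger (and more delicate) claim that $\dot{\leq}$ agrees with the restriction of the usual order of $\mathbb{R}$, which your argument does not need.
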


\section{Conclusion}

The topological results obtained by non-Newtonian metrics (particularly
multiplicative metrics) as in [2-10] are equivalent the ones obtained by
metrics. In [3-9] some results of the multiplicative metric and in [10] some
results of the non-Newtonian metric have been obtained for the fixed point
theory. Those results are direct consequences of Theorem \ref{thm} and
Corollary \ref{cor2} since any type of contraction mapping for the
non-Newtonian metric space is also a contraction mapping for the metric
space and vice versa. For example, the non-Newtonian contraction $%
T:X\rightarrow X$ as in [10] is the classical Banach contraction since%
\begin{equation}
d^{\alpha }(T(x),T(y))\ \dot{\leq}\ k\dot{\times}d^{\alpha }(x,y)\
\Leftrightarrow \ d(T(x),T(y))\leq \lambda .d(x,y)  \label{1}
\end{equation}%
for all $x,y\in X$ where $k\in \lbrack \alpha (0),\alpha (1))$ is constant, $%
d=\alpha ^{-1}\circ d^{\alpha }$ and $\lambda =\alpha ^{-1}(k)$. In
particular, by Remark \ref{ref} and by (\ref{1}), the multiplicative
contraction $T:X\rightarrow X$ as in [2] is the classical Banach contraction
since%
\begin{equation*}
\begin{array}{rcl}
d^{\exp }(T(x),T(y))\leq d^{\exp }(x,y)^{\lambda } & \!\!\Leftrightarrow \!\!
& d^{\exp }(T(x),T(y))\ \dot{\leq}\ d^{\exp }(x,y)^{\lambda }=k\dot{\times}%
d^{\exp }(x,y) \\ 
& \!\!\Leftrightarrow \!\! & d(T(x),T(y))\leq \lambda .d(x,y)%
\end{array}%
\end{equation*}%
for all $x,y\in X$ where $\lambda \in \lbrack 0,1)$ is constant, $d=\ln
\circ d^{\exp }$ and $\lambda =\ln k$. In this way we can obtain most of the
non-Newtonian metric results and most of the multiplicative metric results
applying corresponding properties from the metric setting.


\begin{thebibliography}{99}
\bibitem{} M. Grossman, R. Katz, Non-Newtonian Calculus, Lee Press, Pigeon
Cove, MA, 1972.

\bibitem{} A.F. \c{C}akmak, F. Ba\c{s}ar, Some new results on sequence
spaces with respect to non-Newtonian calculus, J. Inequal. Appl. 2012,
doi:10.1186/1029-242X-2012-228, 17 pp.

\bibitem{} M. \"{O}zav\c{s}ar, A.C. \c{C}evikel, Fixed points of
multiplicative contraction mappings on multiplicative metric spaces,
arXiv:1205.5131v1, 2012.

\bibitem{} F. Gu, Y.J. Cho, Common fixed point results for four maps
satisfying $\phi $-contractive condition in multiplicative metric spaces,
Fixed Point Theory Appl. 2015, 2015:165, 19 pp.

\bibitem{} M. Abbas, M.De la Sen, T. Nazir, Common fixed points of
generalized rational type cocyclic mappings in multiplicative metric spaces,
Discrete Dyn. Nat. Soc. 2015, Art. ID 532725, 10 pp.

\bibitem{} C. Mongkolkeha, W. Sintunavarat, Best proximity points for
multiplicative proximal contraction mapping on multiplicative metric spaces,
J. Nonlinear Sci. Appl. 8 (2015), no. 6, 1134-1140.

\bibitem{} M. Abbas, B. Ali, Y.I. Suleiman, Common fixed points of locally
contractive mappings in multiplicative metric spaces with application, Int.
J. Math. Math. Sci. 2015, Art. ID 218683, 7 pp.

\bibitem{} O. Yamaod, W. Sintunavarat, Some fixed point results for
generalized contraction mappings with cyclic $(\alpha ,\beta )$-admissible
mapping in multiplicative metric spaces, J. Inequal. Appl. 2014, 2014:488,
15 pp.

\bibitem{} X. He, M. Song, D. Chen, Common fixed points for weak commutative
mappings on a multiplicative metric space, Fixed Point Theory Appl. 2014,
2014:48, 9 pp.

\bibitem{} D. Binba\c{s}\i o\u{g}lu, S. Demiriz, D. T\"{u}rko\u{g}lu, Fixed
points of non-Newtonian contraction mappings on non-Newtonian metric spaces,
J. Fixed Point Theory Appl. 2015, doi: 10.1007/s11784-015-0271-y, 12 pp.
\end{thebibliography}
\end{document}